\DeclareMathAlphabet{\mathpzc}{OT1}{pzc}{m}{it}
\newtheorem{task}{Task}
\newtheorem{defi}{Definition}
\newtheorem{ass}{Assumption}
\newtheorem{propo}{Proposition}
\newtheorem{lemm}{Lemma}
\newtheorem{example}{Example}
\newtheorem{proof}{Proof}
\newcommand*{\Rn}[1]{\ensuremath{\mathbb{R}^{n_{#1}}}}
\newcommand*{\R}{\ensuremath{\mathbb{R}}}
\newcommand*{\N}{\ensuremath{\mathbb{N}}}
\DeclareMathOperator*{\argmin}{arg\,min} 
\def\BState{\State\hskip-\ALG@thistlm}
\newcommand{\algorithmicbreak}{\textbf{break}}
\newcommand{\Break}{\State \algorithmicbreak}
\begin{document}
\begin{frontmatter}

\title{Constrained Gaussian Process Learning for Model Predictive Control \thanksref{footnoteinfo}} 

\thanks[footnoteinfo]{The work of this paper is supported by the Federal Ministry of Education and Research within the Forschungscampus \textit{STIMULATE} under grant number ‘13GW0095A’.}

\author[First]{Janine Matschek} 
\author[Second]{Andreas Himmel} 
\author[Second,Third]{Kai Sundmacher}
\author[First]{Rolf Findeisen}

\address[First]{{Laboratory for Systems Theory and Automatic Control, Otto-von-Guericke University Magdeburg, Germany  \\ (e-mail: \{janine.matschek, rolf.findeisen\}@ovgu.de).}}
\address[Second]{Process Systems Engineering, Otto-von-Guericke University Magdeburg, Germany}
\address[Third]{Max Planck Institute for Dynamics of Complex Technical Systems, Magdeburg,  Germany}

\begin{abstract}                
Many control tasks can be formulated as a tracking problem of a known or unknown reference signal. Examples are movement compensation in collaborative robotics, the synchronisation of oscillations for power systems or reference tracking of recipes in chemical process operation. Tracking performance as well as guaranteeing stability of the closed loop strongly depends on two factors: Firstly, it depends on whether the future desired tracking reference signal is known and, secondly, whether the system can track the reference at all. 
This paper shows how to use machine learning, i.e.\ Gaussian processes, to learn a reference from (noisy) data, while  guaranteeing trackability of the modified desired reference predictions in the framework of model predictive control. Guarantees are provided by adjusting the hyperparameters via a constrained optimization.
Two specific scenarios, i.e. asymptotically constant and  periodical references, are discussed.
\end{abstract}

\begin{keyword}
machine learning, Gaussian processes, trajectory tracking, learning supported model predictive control
\end{keyword}

\end{frontmatter}
\section{Introduction}

Model predictive control (MPC) is a popular optimization based control strategy which can handle a broad class of dynamical systems including nonlinear, constrained, multi-input-multi-output systems. 
MPC can be used for different control task, including setpoint stabilisation, tracking of time dependent references, path following or economical operation of a system, see for example \citep{matschek2019nonlinear}. To guarantee repeated feasibility of the optimal control problem as well as to achieve closed loop stability several concepts exist, which differ depending on the control tasks. In tracking MPC the controller can e.g. be designed in error coordinates which leads to time-varying error dynamics and consequently time-dependent terminal ingredients to prove stability \citep{faulwasser2011model}. To do so, the reference needs be known and trackable for the system, i.e. it must be compliant with the state constraints and an admissible reference input should exist to follow the reference given the system dynamics. Alternatively, one can use artificial references  \citep{limon2008mpc, limon2012mpc,ferramosca2009mpc} to ensure feasibility under changing references. Hereby, the system state is steered to follow the artificial reference while the distance of the artificial reference to the actual reference is minimized. While this does not lead to exact tracking of the non-trackable reference, it results in an additional degree of freedom for the controller to modify the reference. Reference modification to achieve good performance and stability can also be achieved by reference governors, which act as pre-filters for the reference signal, see e.g. \citep{garone2017reference} and references therein. Based on the current system state and reference value, a reference governor modifies the reference whenever a constraint violation in the controlled system might occur.

We aim at obtaining prediction models which approximate the reference evolution based on past observations with machine learning that can be used in predictive control for tracking. At the same time, we want to guarantee reachability and trackability of the learned reference by including constraints in the machine learning procedure. 
Applications in which the reference is given only in terms of data/observations  and should be modelled or predicted to be available to the controller are diverse. Examples are dynamically operated chemical plants where references are obtained via real time optimization or autonomous cars which learn from and adapt to human driver provided references. Gaussian processes were used in \cite{maiworm2018two} as a feedforward controller for quantum dot microscopy. In \citep{Klenske2016} Gaussian processes are used to provide external signals to a model predictive controller  correcting the orientation of an astronomic telescope, while in \citep{matschek2020} breathing motions in minimally invasive surgery are modelled via Gaussian processes and are provided to a predictive motion compensation  controller.  
Here, we show how Gaussian processes can be trained to model an external reference signal based on (noisy) measurement data and extrapolate its evolution into the future while guaranteeing trackability of the reference for the controlled system, i.e. satisfying state and input constraints.
Even if the underlying, unknown reference which we want to predict is trackable, uncertainties as e.g.\ measurement noise might lead to a loss of that property, such that usage of unfiltered reference measurements would lead to infeasibility of the control problem. Using GPs with constraints in the learning can restore trackability in such cases. Moreover, adding  constraints can improve the approximation quality of the GP, as unrealistic evolutions are excluded. In case that the original reference is not trackable, the constrained GP allows to find a tradeoff between close approximation and constraint satisfaction. Thus the GP serves both for reference prediction and adaptation of it whenever necessary, see also Figure~\ref{fig:setup}.

\tikzset{
block/.style = {draw, fill=white, rectangle, minimum height=1.5cm, minimum width=1.3cm, align=center},
tmp/.style  = {coordinate}, 
sum/.style= {draw, fill=white, circle, minimum size=0.5cm},
input/.style = {coordinate},
output/.style= {coordinate},
pinstyle/.style = {pin edge={to-,thin,black},
>=latex
}
}

\begin{figure}[t]
\centering
\begin{tikzpicture}[>=latex']

 \node [input] (input){};
    \node [block, right =0.8cm of input] (GP)
    		{GP \\ Reference\\ Generator};
    \node [block, right =2.4cm of GP] (MPC) 
    		{ Controller};
   
    \draw [->] (input) --  (GP);
    \draw [->] (GP) --  (MPC);

\node[align=center] at (3.7,-0.4) {learned \\ reference};
\node[align=center] at (-0.3,-0.4) {provided \\ data ($\times$)};

 \draw[fill=red!20!white, draw=none] plot coordinates{(-1.5, 1.2) (0.5, 1.2) (0.5,1.6) (-1.5,1.6)};

\draw [->] plot coordinates{(-1.5,0.5) (0.5,0.5) };
\draw [->] plot coordinates{(-1.5,0.5) (-1.5,1.7) };
\draw [] plot coordinates{(-0.5,0.4) (-0.5,0.6) };
\node at (-0.5,0.25) {$\tau$};
 \draw [densely dashed,black!50] plot [ smooth, tension=0.8] coordinates{(-1.5,0.7)  (-1,1.3) (-0.5,0.9) (0,1) (0.4,0.8) };

 \draw [mark=x,mark size=2pt, draw=none] plot  coordinates{(-1.4,0.95) (-1.15,1.05) (-1,1.4) (-0.8,1.15)  (-0.5,0.95)   };


 \draw[fill=red!20!white, draw=none] plot coordinates{(2.8, 1.2) (4.8, 1.2) (4.8,1.6) (2.8,1.6)};

\draw [->] plot coordinates{(2.8,0.5) (4.8,0.5) };
\draw [->] plot coordinates{(2.8,0.5) (2.8,1.7) };
\draw [] plot coordinates{(3.8,0.4) (3.8,0.6) };
\node at (3.8,0.25) {$\tau$};

 \draw [black] plot [ smooth, tension=0.8] coordinates{(2.8,0.7)  (3.3,1.2) (3.8,0.9) (4.3,1)  (4.7,0.8) };

\end{tikzpicture}
\caption{Based on data, denoted by $\times$, the unknown reference (dashed) should be modelled by the GP. It learns a reference (solid) while both prediction into the future ($t>\tau$) as well as reference shaping to satisfy the constraints (red) is achieved.}
\label{fig:setup}
\end{figure}
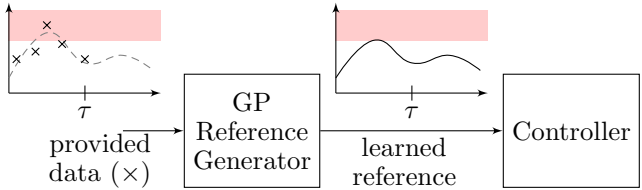

The main contributions of this paper is a guideline how to setup and train GPs to be used as reference predictors with guaranteed trackability of the learned reference. These learning algorithms utilize special structures of the underlying reference which should be modelled, as well as constrained hyperparameter estimation. In contrast to \citep{daVeiga2012gaussian} we do not use truncated multinormal distributions, but constrain the predicted mean of the GP to lie inside the reachable tube and the state constraints during hyperparameter estimation.

The remainder of this paper is structured as follows: Section~2 describes the problem setup of learning based reference prediction via Gaussian processes. Section~3 proposes algorithms for Gaussian processes training to guarantee trackability of the reference. 
Section~4 summarises the achievements and provides directions for future work.

\section{problem setup}

Consider the nonlinear time-discrete system
\begin{equation}
x(k+1)=f(x(k),u(k)), \quad x(0)=x_0,
\label{eq:system}
\end{equation}
where $x \in \Rn{x}$ is the state, $u  \in \Rn{u}$ is the input, and $x_0 \in \Rn{x}$ is the initial condition of the system.

In tracking MPC, the goal is to design a controller such that the system state follows a  reference $x_\text{r}$ while satisfying state constraints $\mathcal X$ and input constraints $\mathcal U$. 
One possibility to guarantee stability of tracking MPC is the use of time varying terminal equality or inequality constraints which depend on the reference \citep{faulwasser2011model, rawlings2017model, matschek2019nonlinear}. Determination of these terminal ingredient consequently requires the knowledge of the reference $x_r$, which is however not always a priori known.  
For example, dynamic operation of chemical plants might lead to sudden changes in the reference based on economical considerations. 
Other examples are autonomous vehicles following a human driver (e.g. adapting to its velocity) while not knowing its future decisions. 
In such cases, machine learning can be used to obtain a  model of the reference, such that the reference can be predicted and thus is known to the controller (including predictions of future values). 
Besides the required knowledge of the reference, the reference must fulfil the following property:
\begin{defi}(Trackability).\\
A reference $x_\text{r}: \N_0 \to \Rn{x} $ is said to be trackable for system~\eqref{eq:system} if it fulfils the state constraints $x_\text{r}(k)\in  \mathcal X$ and can be followed given the system dynamics $\exists  u_\text{r}(k) \in \mathcal{U} $ such that $ x_\text{r}(k+1)=f(x_\text{r}(k),u_\text{r}(k))$ for all $ k \in \N_0$. 
\label{def:reach}
\end{defi}
Though restrictive, trackability of the reference according to Definition~\ref{def:reach} enables desirable properties such as recursive feasibility of an MPC with terminal equality constraints (once being on the reference there exists an admissible input to stay on it).

This definition allows us to formulate the task to use machine learning (in our case Gaussian processes) to design and learn a suitable reference from data:

\begin{task}(Reference generator).\\
Given system~\eqref{eq:system} and data/  measurements $\mathcal D:=\prod\limits_{i=0}^{n} \R^+_0 \times \Rn{x} $ describing the desired reference $r:\N_0\to \Rn{x}$. Design a reference generator $g: \N_0 \times \mathcal D \to \mathcal X $, which provides for $D\in \mathcal D$ a reference $x_\text{r}: \N_0 \to \mathcal X, (k) \mapsto x_\text{r}(k):=g(k,D)$,  which fulfils:
\begin{enumerate}
\item Trackability: The reference $x_\text{r}$ is trackable.
\item Reference Prediction: The reference $x_\text{r}$ spans at least over a receding prediction horizon $N$, i.e.  at $k, x_\text{r}(i)$ is known $\forall i \in \{k,k+1,\ldots,k+N\}$. 
\item Data fitting: The reference model finds a trade off between model complexity and data consistency, i.e. $x_\text{r}(k) \approx r(k)$ for $D\in \mathcal D$.
\end{enumerate}
\label{task:ref}
\end{task}

To address this Task we propose to use a machine learning technique called Gaussian processes. 
We show how to guarantee trackability of the learned reference by introducing additional constraints in the learning phase (constrained hyperparameter optimization). For this purpose a general introduction to GPs and an elaboration on the use of them as reference generators is provided first.

\subsection{Gaussian processes}


Gaussian Processes are stochastic modelling approaches which can be used for classification and regression problems. In control they have gained an increasing attention for the modelling of both static and dynamic systems, see e.g.~\citep{Rasmussen2006, Ostafew2016,  kocijan2004gaussian, berkenkamp2015safe}.
Reasons for this popularity are the limited amount of design decisions, their capability of dealing with noisy data, and the confidence interval that is provided by the GP which allows to investigate the quality of the obtained model.

We will use GPs to obtain the desired reference generator.  The Gaussian process is uniquely defined by a mean function $m: \mathbb{R} \to \mathbb{R} $ and a symmetric, positive semi-definite covariance function $\kappa: \mathbb{R}\times \mathbb{R} \to \mathbb{R}_0^+$ and is denoted by 
\begin{equation*}
y(t) \sim \mathcal{GP} (m(t),\kappa(t,t')).
\end{equation*} 
Here, $t, t' \in \mathbb{R}$ are the regressors or inputs to the GP and the values of the process $y(t)$ at each specific time $t$ possess a normal distribution.
Via the covariance function (also called kernel) a GP relates similarities between the input variables to the similarity between the output variables. 
These mean and covariance functions involve  hyperparameters $\theta \in \Rn{\theta}$, where $n_\theta$ depends on the selected functions $m$ and $\kappa$. The values of the hyperparameters  can be learned based on a hyperparameter training set $D_\theta:=\big\{ \left(t_{\theta,i},y_{\theta,i}\right) \in \R_0^+ \times \Rn{x}\,|\, i=1,2,\ldots,n_{D_\theta}\big\}\in \mathcal D$.
To do so, often a point estimate of the hyperparameters is calculated via the maximization of the marginal logarithmic likelihood.

Our overall goal is to predict or infer the distribution of the output at (possibly unseen) test points $t_*$.
This prediction is based on several design decisions, the hyperparameters, and 
 a training data set  $D_\text{t}:=\big\{ \left(t_{\text{t},i},y_{\text{t},i}\right) \in \R_0^+ \times \Rn{x}\,|\, i=1,2,\ldots,n_{D_\text{t}}\big\}\in \mathcal D$. 
 For ease of notation, we define
  \begin{subequations}
 \begin{align}
     \boldsymbol{t}&:=[t_{\text{t},1},\ldots, t_{\text{t},n_{D_\text{t}}} ],\\ \boldsymbol{y}&:=[y_{\text{t},1},\ldots, y_{\text{t},n_{D_\text{t}}} ],\\  \boldsymbol{m}(\boldsymbol{t})&:=[m(t_{\text{t},1}), \ldots,m(t_{\text{t},n_{D_\text{t}}}) ]
 \end{align}\label{eq:bold_data}
    \end{subequations}
The joint distribution of the training data output $\boldsymbol{y}$ and the test data output $y_*$ at $t_*$ can be expressed as
\begin{equation*}
\begin{pmatrix}
\boldsymbol{y}\\y_*
\end{pmatrix} \sim \mathcal{GP}\left(   \begin{pmatrix}
\boldsymbol{m}(\boldsymbol{t})\\m(t_*)
\end{pmatrix}  , \begin{pmatrix}
K(\boldsymbol{t},\boldsymbol{t})+\sigma^2_n I & K(\boldsymbol{t},t_*)\\ K(t_*,\boldsymbol{t}) & \kappa(t_*,t_*)
\end{pmatrix}\right).
\end{equation*}
Here, $\sigma^2_n$ represents the variance of the measurement noise. The entries of the covariance matrix $K$ are calculated using the covariance function $ \kappa$. Specifically, $K(\boldsymbol{t},\boldsymbol{t})$ is of dimension $n_{D_\text{t}}\times n_{D_\text{t}}$ and specifies the covariance between all of the training data points, while $ K(\boldsymbol{t},t_*)$ and  $ K(t_*,\boldsymbol{t})$ (with dimensions $n_{D_\text{t}}\times 1$ and $1 \times n_{D_\text{t}}$, respectively) define the cross correlation between test and training data points. The scalar $ \kappa(t_*,t_*)$ is the auto covariance of the test data. 
Given this joint probability distribution, the conditional posterior distribution can be calculated via the posterior mean function $m^+:\mathbb{R} \to \mathbb{R}$ defined by
\begin{align}
m^+(t_*):=& m(t_*)\nonumber\\
&+K(t_*,\boldsymbol{t})(K(\boldsymbol{t},\boldsymbol{t})+\sigma^2_n I)^{-1}\big(\boldsymbol{y}-\boldsymbol{m}(\boldsymbol{t})\big)
\label{eq:mean}
\end{align}
 and the posterior covariance $\kappa^+: \mathbb{R}\times \mathbb{R} \to \mathbb{R}_0^+$ defined by
 \begin{align*}
 \kappa^+(t_*,t_*):=&\kappa(t_*,t_*)\\
 &-K(t_*,\boldsymbol{t})\big(K(\boldsymbol{t},\boldsymbol{t})+\sigma_n^2 I\big)^{-1}K(\boldsymbol{t},t_*).
 \end{align*}
 
These posterior moments clearly depend on the involved data set $D=D_\theta \cup D_\text{t}$ and the hyperparameter $\theta$. Therefore, whenever necessary we will explicitly denote this dependency by $m^+(t_*| D,\theta)$ and  $\kappa^+(t_*,t_*| D, \theta)$, but refrain from using it elsewhere for sake of brevity of notation.
The aforementioned design decisions involved in GP modelling include the selection of the prior  mean and covariance functions $m$ and $\kappa$.   If knowledge of the underlying system is available, it can be included this way leading to improved inter- and extrapolation quality. 

\subsection{GPs as reference predictors}

In our setup we will use the posterior mean of the GP as the reference $x_\text{r}(k):=m^+(t_*)$, where $t_*=T_\text{s}k \in \mathfrak{T}:=\{t \in \R_0^+ |\,  t=T_\text{s}k,k\in \N_0 \}$ with sampling time $T_\text{s}$. 
If $n_x>1$, either $n_x$ independent GPs can be trained (as e.g. done in \citep{matschek2020} or correlations between the outputs can be modelled, see e.g. \citep{Rasmussen2006} (Chapter 9.1) or \citep{salzmann2010implicitly}, and references therein.


By using Gaussian processes as reference predictors Task~\ref{task:ref} (2) and (3) are naturally fulfilled, as GPs form a (static) prediction model trained via a hyperparameter optimization that avoids overfitting.
The remaining task is to satisfy the trackability property. 
The reference $x_\text{r}$ and therefore the posterior mean $m^+(t_*)$  with $t_*\in \mathfrak{T}$ must be consistent with state constraints. Additionally, it must be followable for system \eqref{eq:system} such that there exists an input $u(k) \in \mathcal U,\, \forall k\in \N_0$ for system \eqref{eq:system} to stay on the reference when starting on it. 
To this end we use the definition of a reachable set from \cite{blanchini2008set}:

\begin{defi}
(Reachability set). Given the set of initial conditions $\mathcal P \subset \Rn{x}$, the reachability set $\mathcal R_T (\mathcal P)\subset \Rn{x}$ from $\mathcal P$ in time $T < +\infty$ is the set of all states $x$ for which there exists $x(0) \in \mathcal P$ and $u(\cdot) \in \mathcal U$ such that $x(T) = x$.
\end{defi}

We can use the one step ahead reachable set $\mathcal R_1 (\mathcal P)$ from an initial condition $\mathcal P:=\{x_\text{r}(k)\}$ as a sufficient condition to verify that $x_\text{r}(k+1)$ is followable, i.e. if  $x_\text{r}(k+1)\in \mathcal R_1 \big(x_\text{r}(k)\big)$  and $x_\text{r}(k) \in \mathcal X$ for all $k\in\N_0$ then the reference is reachable according to Definition~\ref{def:reach}.
We denote the one step ahead reachable tube as  $\mathcal T_{k+1}:=\mathcal R_1 (x_\text{r}(k))$. An illustration of a reachable tube is shown in Figure~\ref{fig:tube}.
In essence, we have to design the GP such that the posterior mean $m^+(t_*=T_\text{s} k)$ is constrained by the intersection of the state constraints $\mathcal X$ and the reachable tube $\mathcal T_k$. Then, point three of Task~\ref{task:ref} is fulfilled. We show how to do so in the following paragraphs.

\begin{figure}
\begin{center}
\input{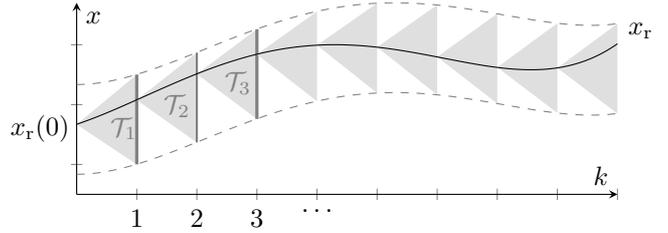}
\caption{Illustration of a reachable tube.}
\label{fig:tube}
\end{center}
\end{figure}

\section{Constrained GP Learning}

We propose to include constraints in the learning phase of the GP to satisfy $m^+(t_*) \in (\mathcal T_k \cap \mathcal X )$ with $t_*=T_\text{s}k$ by this ensuring trackability. 
To learn the hyperparameters $\theta$ of the mean and covariance function we rely on maximising the logarithmic marginal likelihood. This optimization results in a point estimate of the most likely hyperparameters given the prior belief (uniformly distributed hyperparameter prior) and the hyperparameter training data set $D_\theta\in\mathcal D$. Analogously to \eqref{eq:bold_data} we define $\boldsymbol{t}_\theta, \boldsymbol{y}_\theta$ and $\boldsymbol{m}(\boldsymbol{t}_\theta)$ from $D_\theta$. 
The optimization problem can be written as
\begin{argmini!}
{\theta}{l(\theta)}
{\label{eq:hyp_opti}}{\hat{\theta}:=}
\addConstraint{m^+(t_*|D_\theta,\theta)}{\in (\mathcal T_k \cap \mathcal X )}
\addConstraint{t_*=T_\mathrm{s}k,\;\forall k }{\in\left\{0,\ldots,\bar{k}\right\},}
\end{argmini!}
where the cost function $l(\theta)$ is the negative logarithmic marginal likelihood
\begin{equation*}
l( \theta):= \ln (|K(\boldsymbol{t}_\theta,\boldsymbol{t}_\theta)|)+\boldsymbol{y}_\theta^\top K(\boldsymbol{t}_\theta,\boldsymbol{t}_\theta)^{-1}\boldsymbol{y}_\theta+n_{D_\theta}\ln (2\pi).
\end{equation*}
We denote the optimal solution of \eqref{eq:hyp_opti} with $\hat\theta$.
Even though the number of decision variables $\theta$ is rather small, the optimization problem is complex as it has a large number of constraints and due to the nonlinearity and nonconvexity of the cost and constraint. Especially the inverse of $K(\boldsymbol{t}_\theta,\boldsymbol{t}_\theta)$ introduces a significant computational complexity when considering a high number of data points $n_{\mathcal{D}_\theta}$. To solve the  optimization problem several numerical optimization methods exist, see e.g. \cite{kocijan2016modelling}. 
In the remainder of this paper, we rely on the following assumption:
\begin{ass}
The optimization problem \eqref{eq:hyp_opti} is feasible.
\label{ass:feasible}
\end{ass}

\begin{propo}
Given Assumption~\ref{ass:feasible} the resulting parame\-trisation $\hat \theta $ of the GP obtained via problem~\eqref{eq:hyp_opti} guarantees trackability of the reference for  system~\eqref{eq:system} for all $k \in \{0,1,\ldots, \bar k\}$. 
\label{prop:reach}
\end{propo}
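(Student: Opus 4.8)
The plan is to prove the proposition by unwinding the definitions, since the constrained problem~\eqref{eq:hyp_opti} was built precisely to encode trackability. First I would invoke Assumption~\ref{ass:feasible}: feasibility of~\eqref{eq:hyp_opti} guarantees that an optimal solution $\hat\theta$ exists, and being optimal it is in particular feasible, hence satisfies every imposed constraint. Substituting the definition of the learned reference $x_\text{r}(k):=m^+(T_\text{s}k\mid D_\theta,\hat\theta)$, the constraints of~\eqref{eq:hyp_opti} translate into $x_\text{r}(k)\in\mathcal T_k\cap\mathcal X$ for all $k\in\{0,1,\ldots,\bar k\}$.

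Next I would split this inclusion according to the two requirements of Definition~\ref{def:reach}. The part $x_\text{r}(k)\in\mathcal X$ for all indices in the set is exactly state-constraint compliance along the reference. For followability I would recall that $\mathcal T_k=\mathcal R_1\bigl(x_\text{r}(k-1)\bigr)$ and invoke the definition of the reachability set: $x_\text{r}(k)\in\mathcal R_1\bigl(x_\text{r}(k-1)\bigr)$ is by definition equivalent to the existence of an admissible input $u_\text{r}(k-1)\in\mathcal U$ with $x_\text{r}(k)=f\bigl(x_\text{r}(k-1),u_\text{r}(k-1)\bigr)$. Ranging over $k=1,\ldots,\bar k$ yields a sequence of admissible reference inputs keeping the reference on itself, which together with the state-constraint part is trackability in the sense of Definition~\ref{def:reach} restricted to $\{0,\ldots,\bar k\}$; this finite-horizon restriction is inherited from the finitely many constraints of~\eqref{eq:hyp_opti} and matches the claim. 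For $k=0$ the constraint only needs to supply $x_\text{r}(0)\in\mathcal X$ (reading $\mathcal T_0$ as the ambient state space, or equivalently indexing the tube constraints from $k=1$), which is all Definition~\ref{def:reach} requires of the initial reference value.

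The step I expect to need the most care is not an estimate but a subtlety of the formulation: the tube $\mathcal T_k$ that appears as a constraint in~\eqref{eq:hyp_opti} is itself defined through $x_\text{r}(k-1)=m^+(T_\text{s}(k-1)\mid D_\theta,\theta)$ and therefore depends on the decision variable $\theta$, so the constraints are coupled recursively in $k$ rather than fixed a priori. The resolution I would give is that feasibility of $\hat\theta$ means all these $\theta$-dependent constraints hold simultaneously for the single self-consistent reference trajectory generated by $\hat\theta$; no fixed-point or inductive construction is needed beyond reading off constraints that are already satisfied. I would close by noting that this, combined with the GP being a static predictor trained to avoid overfitting, also delivers parts~(2)--(3) of Task~\ref{task:ref}, provided $\bar k$ is chosen at least as large as the prediction horizon $N$.
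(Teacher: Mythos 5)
Your proposal is correct and takes essentially the same route the paper relies on: the paper states Proposition~\ref{prop:reach} without a separate proof precisely because it follows by unwinding feasibility of~\eqref{eq:hyp_opti} into the sufficient condition given just before the tube definition, namely $x_\text{r}(k)\in\mathcal X$ and $x_\text{r}(k+1)\in\mathcal R_1\bigl(x_\text{r}(k)\bigr)$, which by the definition of the reachability set yields an admissible $u_\text{r}(k)\in\mathcal U$ for each step up to $\bar k$. Your explicit treatment of the $k=0$ case and of the $\theta$-dependence of $\mathcal T_k$ (the constraints holding simultaneously for the self-consistent trajectory generated by $\hat\theta$) is a sound reading of the same argument rather than a different approach.
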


Please note that constraint satisfaction of the predicted mean is guaranteed in a deterministic way (despite the stochastic nature of the GP). 
However, Proposition~\ref{prop:reach} only guarantees trackability up to step  $\bar k$. To be able to guarantee trackability for all times, as demanded in Task~\ref{task:ref}, we need to further investigate the underlying reference structure. We will consider asymptotic as well as periodic references as special cases.

\begin{figure}
\begin{center}
\input{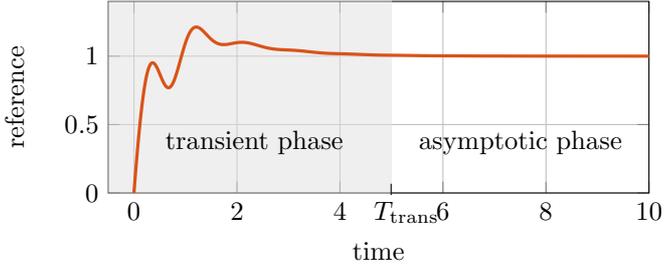}
\caption{Illustration of an asymptotically constant reference. During the transient phase (grey area) the reference varies arbitrary whereas after $T_\text{trans}=T_\text{s}k_\text{trans} $ it converges to a constant.}
\label{fig:ref_illu_const}
\end{center}
\end{figure}

\subsection{Asymptotically Constant References}
A special case of time dependent references are those which change for a finite time and are  constant (or converge to a constant) afterwards, cf. Figure~\ref{fig:ref_illu_const}. Examples for such references are the transition between two setpoints in chemical plants or the parking of a car.
As the transient phase is of finite time $T_\text{trans}=T_\text{s} k_\text{trans} < \infty$, a finite number of constraints allows for  trackability during the transient via $\bar k \geq k_\text{trans}$.
In the following, an iterative algorithm is derived which ensures trackability for all times $k\geq \bar k$.

For the specific type of reference we use a prior mean $m$ and covariance function $\kappa$ with following properties:
\begin{ass}
The prior mean function $m$ is constant.
\label{ass:mean_const}
\end{ass}
\begin{ass}
The covariance function $\kappa$ is stationary and strictly monotonously decreasing $\kappa(t_1,t_2)<\kappa(t_3,t_4)$ for all $|t_2-t_1|>|t_4-t_3|$. 
\label{ass:cov_const}
\end{ass}
\begin{ass}
The absolute value of the time derivative of the covariance function $\dot \kappa: \R \times \R \to \R, \dot \kappa:=\dfrac{\partial \kappa(\cdot,t')}{\partial t} $ is strictly monotonously decreasing $|\dot \kappa(t_1,t_2)|<|\dot \kappa(t_3,t_4)|$ at least for all $|t_2-t_1|>|t_4-t_3|>\zeta(\theta)$, where $\zeta:\Rn{\theta}\to \R$   depends on the hyperparameters. 
\label{ass:derivative_kappa}
\end{ass}
For example, the popular squared exponential covariance function $\kappa(t,t') = \theta_1^2 \text{exp}(-\frac{1}{2\theta_2^{2}} (t-t')^2)$ fulfils Assumption~\ref{ass:cov_const} and \ref{ass:derivative_kappa} for $\zeta(\theta) =\theta_2$.

Assuming fixed  hyperparameters $\theta$ and a fixed training data set $D_\theta \in \mathcal D$, 
the posterior mean \eqref{eq:mean} can be reformulated as a weighted sum 
\begin{equation}
m^+(t_*)=m(t_*)+\sum\limits_{i=1}^{n_{D_\theta}} c_i  \kappa(t_i,t_*).
\label{eq:sum}
\end{equation}
Here, $t_i \in \boldsymbol{t}_\theta $ from $D_\theta$ and $c_i$ are constant coefficients which depend on the fixed hyperparameters and the training data. 
A bound on $m^+$ can be obtained via the triangular inequality such that 
\begin{equation}
    | m^+(t_*)-m(t_*)|\leq \bar m(t_*) := \sum\limits_{i=1}^{n_{D_\theta}} |c_i|  \kappa(t_i,t_*).
    \label{eq:bound_m}
\end{equation}
Similar to \eqref{eq:sum} and \eqref{eq:bound_m} and with Assumption~\ref{ass:mean_const}, the  derivative of $m^+$ can be expressed via
\begin{equation*}
    \dot m^+(t_*) = \sum\limits_{i=1}^{n_{D_\theta}} c_i \dot{ \kappa}(t_i,t_*),   
\end{equation*}
with a corresponding bound
\begin{equation}
    |\dot m^+(t_*)| \leq \bar{\dot{m}}(t_*):=\sum\limits_{i=1}^{n_{D_\theta}} |c_i| |\dot{ \kappa}(t_i,t_*)|.
      \label{eq:bound_mdot}
\end{equation}
Furthermore, we rely on the assumption that the growth rate of the tube can be characterised by a constant lower bound $\underline{ \tau}$ and upper bound $\overline{\tau}$:
\begin{ass}
$\underline{ \tau}$ and $\overline{\tau}$ are constant and form an inner approximation of the one-step reachable tube such that $[x_\text{r}(k)-\underline{ \tau}T_\text{s}, x_\text{r}(k)+\overline{ \tau}T_\text{s}] \subseteq \mathcal{T}_{k+1}$ 
for all $x_\text{r}(k)  \in \tilde{\mathcal{X}}\subseteq \mathcal{X}$. 
\label{ass:tube_growth}
\end{ass}

Let  $\tilde{\mathcal{X}}:=[m(\bar t)-\bar m(\bar t) ,m(\bar t)+\bar m (\bar t)]$. If $\tilde{\mathcal{X}} \subseteq \mathcal X$, and if $\underline \tau <\bar{\dot{m}}(\bar t)<\overline \tau$ for time $\bar t=\bar k T_\text{s}$, then (relying also on \eqref{eq:hyp_opti}) $m(t) \in \mathcal X \cap \mathcal T_k(\mathcal P)$ for all $t \in \mathfrak{T}$, i.e. trackability is achieved as also outlined in Lemma~\ref{lemma:constant}. If these requirements are not fulfilled for $\bar k$, the hyperparameter optimization \eqref{eq:hyp_opti} must be performed again with updated $\bar k$. Iteratively updating the number of constraints by increasing  $\bar k$ will lead to constraint satisfaction for a longer time span, decreased bounds for the mean and its derivative (as $|t_i-\bar t|$ is increased) as well as less conservative bounds $\underline \tau$, $\overline \tau$.
In Algorithm \ref{algo:hyp}, the whole procedure is summarised.

\begin{algorithm}
\caption{GP Learning for asymptotically constant references}
\begin{algorithmic}[1]
\Procedure{GP Training}{}
\State \textbf{Init} $\bar k, \mathcal D_\theta$
\While {true} 
\State obtain $\hat \theta$ via \eqref{eq:hyp_opti}  using $\left(\bar k,\mathcal D_\theta\right) $
\If {$|t_i-T_\text{s}\bar k|>\zeta\left(\hat \theta\right)$}
\State compute $ m^+(\bar t),\bar m(\bar t),\bar{\dot{ m}}(\bar t)$ with $\bar t=T_\text{s}\bar k$  \phantom{ssssssssssi} via \eqref{eq:mean},\eqref{eq:bound_m},\eqref{eq:bound_mdot}  using $\left(\hat \theta,\mathcal D_\theta \right)$
\If {$m^+(\bar t) \pm \bar m(\bar t) \in \mathcal X $}
\State choose $\underline \tau, \overline \tau$ in accordance to Ass.~\ref{ass:tube_growth} with \phantom{sssssssssssssii} $\tilde{\mathcal {X}}=[m^+(\bar t) - \bar m(\bar t), m^+(\bar t) + \bar m(\bar t)]$
\If{$ \underline \tau \leq \bar{\dot{ m}}(\bar t) \leq \overline \tau $} 
\Break
\EndIf
\EndIf
\EndIf
\State $\bar k \gets \bar k +1$
\EndWhile
\State \Return $\bar k, \hat \theta$
\EndProcedure
\end{algorithmic}
\label{algo:hyp}
\end{algorithm}

We assume that Algorithm~\ref{algo:hyp} terminates in finite time.
This allows to conclude the following result:

\begin{lemm}
Given Assumptions~\ref{ass:feasible} to \ref{ass:tube_growth} the posterior mean \eqref{eq:mean} of a GP trained with Algorithm~\ref{algo:hyp} is trackable in the sense of Definition~\ref{def:reach} for system \eqref{eq:system}.
\label{lemma:constant}
\end{lemm}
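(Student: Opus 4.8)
The plan is to split the index set at $\bar k$: on $\{0,\ldots,\bar k\}$ trackability comes for free, and all the work is pushing it into the tail $k\ge\bar k$ using the exit conditions of Algorithm~\ref{algo:hyp} and Assumptions~\ref{ass:mean_const}--\ref{ass:tube_growth}. Since the algorithm returns a $\hat\theta$ that is feasible for \eqref{eq:hyp_opti}, Proposition~\ref{prop:reach} already gives $x_\text{r}(k)=m^+(T_\text{s}k\mid\mathcal D_\theta,\hat\theta)\in\mathcal X\cap\mathcal T_k$ for $k\in\{0,\ldots,\bar k\}$; in particular $x_\text{r}(k)\in\mathcal X$ there and $x_\text{r}(k{+}1)\in\mathcal R_1(x_\text{r}(k))$ for $k\le\bar k-1$. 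It then remains to show (i) $x_\text{r}(k)\in\mathcal X$ for $k\ge\bar k$, and (ii) $x_\text{r}(k{+}1)\in\mathcal R_1(x_\text{r}(k))$ for $k\ge\bar k$ --- note that (ii) with $k=\bar k$ closes the single step left open by Proposition~\ref{prop:reach}, so the two regimes then glue into trackability for all $k\in\N_0$ in the sense of Definition~\ref{def:reach}.

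The engine for the tail is a monotonicity-in-horizon argument on the weighted-sum form \eqref{eq:sum}. Algorithm~\ref{algo:hyp} only exits after $\bar t:=T_\text{s}\bar k$ exceeds every training time $t_i$ by more than $\zeta(\hat\theta)$, so for any test time $t_*\ge\bar t$ one has $|t_i-t_*|\ge|t_i-\bar t|>\zeta(\hat\theta)$ for all $i$. Assumption~\ref{ass:cov_const} then gives $\kappa(t_i,t_*)\le\kappa(t_i,\bar t)$, hence $\bar m(t_*)\le\bar m(\bar t)$ from \eqref{eq:bound_m}; Assumption~\ref{ass:derivative_kappa} gives $|\dot\kappa(t_i,t_*)|\le|\dot\kappa(t_i,\bar t)|$, hence $\bar{\dot m}(t_*)\le\bar{\dot m}(\bar t)$ from \eqref{eq:bound_mdot}, for all $t_*\ge\bar t$. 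So the single check performed at $\bar t$ certifies both bounds along the whole tail. For (i), combine this with the triangle estimate $|m^+(t_*)-m^+(\bar t)|\le\sum_i|c_i|\,|\kappa(t_i,t_*)-\kappa(t_i,\bar t)|\le\sum_i|c_i|\kappa(t_i,\bar t)=\bar m(\bar t)$ (the last step since $\kappa\ge 0$ and $\kappa(t_i,t_*)\le\kappa(t_i,\bar t)$); therefore $m^+(t_*)\in\tilde{\mathcal X}=[m^+(\bar t)-\bar m(\bar t),\,m^+(\bar t)+\bar m(\bar t)]$ for every $t_*\ge\bar t$, and the exit condition $m^+(\bar t)\pm\bar m(\bar t)\in\mathcal X$ together with convexity of $\mathcal X$ (a box in each scalar coordinate) yields $\tilde{\mathcal X}\subseteq\mathcal X$, i.e.\ $x_\text{r}(k)\in\mathcal X$ for all $k\ge\bar k$.

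For (ii), use that $m^+$ is $C^1$ in its argument (differentiability of $\kappa(t_i,\cdot)$ via Assumption~\ref{ass:derivative_kappa}), so the mean value theorem gives $\xi\in(T_\text{s}k,T_\text{s}(k{+}1))$ with $x_\text{r}(k{+}1)-x_\text{r}(k)=T_\text{s}\dot m^+(\xi)$, whence $|x_\text{r}(k{+}1)-x_\text{r}(k)|\le T_\text{s}\,\bar{\dot m}(\xi)\le T_\text{s}\,\bar{\dot m}(\bar t)$ by the tail monotonicity and $\xi>T_\text{s}k\ge\bar t$. Since $x_\text{r}(k)\in\tilde{\mathcal X}$, Assumption~\ref{ass:tube_growth} gives $[x_\text{r}(k)-\underline\tau T_\text{s},\,x_\text{r}(k)+\overline\tau T_\text{s}]\subseteq\mathcal T_{k+1}$, and the rate condition checked in Algorithm~\ref{algo:hyp} (that $T_\text{s}\bar{\dot m}(\bar t)$ is compatible with the tube growth $\underline\tau T_\text{s},\overline\tau T_\text{s}$) places $x_\text{r}(k{+}1)$ inside that interval, so $x_\text{r}(k{+}1)\in\mathcal T_{k+1}=\mathcal R_1(x_\text{r}(k))$ and an admissible $u_\text{r}(k)\in\mathcal U$ with $x_\text{r}(k{+}1)=f(x_\text{r}(k),u_\text{r}(k))$ exists. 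Chaining (i)--(ii) for $k\ge\bar k$ with Proposition~\ref{prop:reach} for $k\le\bar k-1$ gives trackability.

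The delicate point is the tail monotonicity: Assumption~\ref{ass:derivative_kappa} is only an \emph{asymptotic} (far-field) statement requiring $|t_i-\bar t|>\zeta(\hat\theta)$, which is exactly why the loop in Algorithm~\ref{algo:hyp} keeps increasing $\bar k$, and it is the assumed finite termination of that loop that simultaneously (a) puts $\bar t$ in the monotone regime of $\kappa$ and $\dot\kappa$, and (b) shrinks $\bar m(\bar t),\bar{\dot m}(\bar t)$ enough (as the $t_i$ move farther from $\bar t$) for the two exit inequalities to become satisfiable. A secondary bookkeeping point is to ensure no step $k$ slips between the finite horizon guaranteed by \eqref{eq:hyp_opti} and the tail argument; this is why (ii) is established for all $k\ge\bar k$ rather than $k>\bar k$, and why the two (equivalent, since $|m^+(\bar t)-m(\bar t)|\le\bar m(\bar t)$) descriptions of $\tilde{\mathcal X}$ must be matched carefully.
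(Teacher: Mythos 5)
Your proposal is correct and follows essentially the same route as the paper's proof: feasibility of \eqref{eq:hyp_opti} handles $k\le\bar k$, while the monotonicity of $\kappa$ and $|\dot\kappa|$ (Assumptions~\ref{ass:cov_const}, \ref{ass:derivative_kappa}, enforced via line~5) propagates the certificates $\bar m(\bar t)$ and $\bar{\dot m}(\bar t)$ from lines~7--9 together with Assumption~\ref{ass:tube_growth} to all later sampling instants. Your version is in fact slightly tighter on two points the paper leaves implicit --- the mean-value-theorem bound on the one-step increment and the explicit treatment of the transition step $k=\bar k$, with $\tilde{\mathcal X}$ centred at $m^+(\bar t)$ consistently with lines~7--8 of Algorithm~\ref{algo:hyp} --- but these are refinements of the same argument rather than a different proof.
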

\begin{proof}
If Algorithm~\ref{algo:hyp} converges, we obtain the optimised hyperparameters $\hat \theta$ and the time instant $\bar k$. Problem \eqref{eq:hyp_opti} (under Assumption~\ref{ass:feasible}) guarantees $m^+(t)\in (\mathcal T_k \cap \mathcal X )$ for all $t\in \mathfrak{T}_\leq:=\{t \in \mathfrak T|t\leq \bar t=\bar k T_\text{s}\}$. 
From Assumption~\ref{ass:cov_const} follows $\bar m(t) < \bar m(\bar t)$ 
if $|t_i- t|>|t_i- \bar t|$ for all $t_i \in \boldsymbol{t}_\theta$.
Line 7 in Algorithm~\ref{algo:hyp} guarantees $[m(\bar t)-\bar m(\bar t), m(\bar t)+\bar m(\bar t)] \in \mathcal X$ and thus $m^+(t) \in \mathcal X$ for all times $t \in \mathfrak{T}_> :=\mathfrak{T}\setminus \mathfrak{T}_\leq $. 

Line 5 in Algorithm~\ref{algo:hyp} ensures monotonicity of $|\dot \kappa|$ (see Assumption~\ref{ass:derivative_kappa}).
Additionally, $ \underline \tau \leq \bar{\dot{ m}}(\bar t) \leq \overline \tau $ (line 9).
Due to the structure of \eqref{eq:bound_mdot} monotonicity of $|\dot \kappa|$ implies monotonicity of $\bar{\dot{ m}}$ such that 
$\bar{\dot{m}}(t) < \bar{\dot{m}}(\bar t)$ for all $t$ which fulfil $|t_i- t|>|t_i-\bar t|$, with $t_i\in \boldsymbol{t}_\theta$. 
Consequently, 
$ \underline \tau \leq \bar{\dot{ m}}(t) \leq \overline \tau $ for all $t\in \mathfrak{T}_>$. 
Including  Assumption~\ref{ass:tube_growth} (line 8) results in 
$m^+(t+T_\text{s}) \in \left[ m^+(t)-\underline{ \tau}T_\text{s}    ,m^+(t)+\overline{ \tau}T_\text{s}\right]  \subseteq \mathcal{T}_{k+1}\,\forall t \in \mathfrak{T}_>$.  
In other words, $m^+(t)\in \mathcal T_k $ for all times  $t \in \mathfrak{T}_>$. 
All in all, $m^+(t)\in (\mathcal T_k \cap \mathcal X )$ for all times  $t \in \mathfrak{T}$. \hspace{2.7cm} \qed 

\end{proof}

We provide an illustrative example to show the power of the approach: 

\begin{figure}
\begin{center}
%
%
\definecolor{mycolor1}{rgb}{0.00000,0.44700,0.74100}%
\definecolor{mycolor2}{rgb}{0.85000,0.32500,0.09800}%
\definecolor{mycolor3}{rgb}{0.92900,0.69400,0.12500}%
\definecolor{mycolor4}{rgb}{0.49400,0.18400,0.55600}%
\begin{tikzpicture}

\begin{axis}[%
width=2.8in,
height=0.8in,
at={(0.8in,1.4in)},
scale only axis,
unbounded coords=jump,
xmin=70,
xmax=130,
xmajorgrids,
ymin=0,
ymax=12,
ylabel={$\bar m$},
xticklabels={},
ymajorgrids,
legend style={legend cell align=left,align=left,draw=white!15!black}
]
\addplot [color=mycolor1,solid,forget plot,very thick]
  table[row sep=crcr]{%
70	nan\\
71	nan\\
72	nan\\
73	nan\\
74	nan\\
75	nan\\
76	nan\\
77	nan\\
78	nan\\
79	nan\\
80	nan\\
81	11.134\\
82	10.764\\
83	10.339\\
84	9.8732\\
85	9.3807\\
86	8.8712\\
87	8.3532\\
88	7.8332\\
89	7.3172\\
90	6.8093\\
91	6.3131\\
92	5.8313\\
93	5.3658\\
94	4.9183\\
95	4.4897\\
96	4.0805\\
97	3.691\\
98	3.3208\\
99	2.9699\\
100	2.6635\\
101	2.3848\\
102	2.1316\\
103	1.9022\\
104	1.6945\\
105	1.5069\\
106	1.338\\
107	1.1858\\
108	1.0492\\
109	0.92681\\
110	0.8173\\
111	0.71943\\
112	0.63225\\
113	0.55463\\
114	0.48572\\
115	0.42465\\
116	0.37061\\
117	0.32286\\
118	0.2808\\
119	0.24378\\
120	0.21128\\
121	0.18279\\
122	0.15788\\
123	0.13609\\
124	0.11712\\
125	0.10062\\
126	0.086284\\
127	0.073861\\
128	0.063115\\
129	0.053837\\
130	0.045841\\
};
\end{axis}

\begin{axis}[%
width=2.8in,
height=0.8in,
at={(0.8in,0.5in)},
scale only axis,
scale only axis,
unbounded coords=jump,
xmin=70,
xmax=130,
xlabel={$\bar k$},
xmajorgrids,
ymin=-100,
ymax=100,
ylabel={$\dot m^+$},
ymajorgrids,
legend style={at={(0.02,0.17)},anchor=south west,legend cell align=left,align=left,draw=white!15!black}
]
\addplot [color=mycolor1,dashed,very thick]
  table[row sep=crcr]{%
70	nan\\
71	nan\\
72	nan\\
73	nan\\
74	nan\\
75	nan\\
76	nan\\
77	nan\\
78	nan\\
79	nan\\
80	nan\\
81	85.798\\
82	82.102\\
83	77.849\\
84	73.196\\
85	68.273\\
86	63.18\\
87	58.002\\
88	52.805\\
89	47.647\\
90	42.571\\
91	37.611\\
92	32.795\\
93	28.143\\
94	23.67\\
95	19.387\\
96	15.298\\
97	11.406\\
98	7.7065\\
99	4.2003\\
100	1.1405\\
101	-1.6406\\
102	-4.1632\\
103	-6.4468\\
104	-8.5107\\
105	-10.372\\
106	-12.046\\
107	-13.551\\
108	-14.899\\
109	-16.105\\
110	-17.181\\
111	-18.14\\
112	-18.993\\
113	-19.749\\
114	-20.419\\
115	-21.011\\
116	-21.533\\
117	-21.993\\
118	-22.397\\
119	-22.75\\
120	-23.06\\
121	-23.33\\
122	-23.565\\
123	-23.77\\
124	-23.947\\
125	-24.1\\
126	-24.233\\
127	-24.348\\
128	-24.446\\
129	-24.53\\
130	-24.603\\
};
\addlegendentry{$\underline \tau$};

\addplot [color=mycolor2,dashdotted,very thick]
  table[row sep=crcr]{%
70	nan\\
71	nan\\
72	nan\\
73	nan\\
74	nan\\
75	nan\\
76	nan\\
77	nan\\
78	nan\\
79	nan\\
80	nan\\
81	-86.884\\
82	-83.184\\
83	-78.926\\
84	-74.269\\
85	-69.342\\
86	-64.244\\
87	-59.062\\
88	-53.859\\
89	-48.697\\
90	-43.616\\
91	-38.651\\
92	-33.83\\
93	-29.173\\
94	-24.695\\
95	-20.406\\
96	-16.312\\
97	-12.414\\
98	-8.7094\\
99	-5.1976\\
100	-2.1297\\
101	0.66431\\
102	3.2045\\
103	5.5098\\
104	7.599\\
105	9.4894\\
106	11.194\\
107	12.733\\
108	14.116\\
109	15.359\\
110	16.473\\
111	17.471\\
112	18.362\\
113	19.158\\
114	19.866\\
115	20.496\\
116	21.054\\
117	21.55\\
118	21.987\\
119	22.374\\
120	22.715\\
121	23.014\\
122	23.278\\
123	23.509\\
124	23.711\\
125	23.887\\
126	24.041\\
127	24.175\\
128	24.292\\
129	24.393\\
130	24.48\\
};
\addlegendentry{$\overline \tau$};

\addplot [color=mycolor3,solid,very thick]
  table[row sep=crcr]{%
70	nan\\
71	nan\\
72	nan\\
73	nan\\
74	nan\\
75	nan\\
76	nan\\
77	nan\\
78	nan\\
79	nan\\
80	nan\\
81	92.193\\
82	90.378\\
83	88.042\\
84	85.29\\
85	82.213\\
86	78.884\\
87	75.367\\
88	71.715\\
89	67.979\\
90	64.195\\
91	60.397\\
92	56.615\\
93	52.872\\
94	49.188\\
95	45.579\\
96	42.056\\
97	38.628\\
98	35.3\\
99	32.077\\
100	29.214\\
101	26.557\\
102	24.097\\
103	21.824\\
104	19.728\\
105	17.799\\
106	16.031\\
107	14.41\\
108	12.929\\
109	11.579\\
110	10.35\\
111	9.2346\\
112	8.2242\\
113	7.3101\\
114	6.4858\\
115	5.7439\\
116	5.0772\\
117	4.4793\\
118	3.9446\\
119	3.4672\\
120	3.0419\\
121	2.6638\\
122	2.3286\\
123	2.0312\\
124	1.7687\\
125	1.5373\\
126	1.3336\\
127	1.1547\\
128	0.99798\\
129	0.86088\\
130	0.74122\\
};

\addplot [color=mycolor3,solid,very thick]
  table[row sep=crcr]{%
70	nan\\
71	nan\\
72	nan\\
73	nan\\
74	nan\\
75	nan\\
76	nan\\
77	nan\\
78	nan\\
79	nan\\
80	nan\\
81	-92.193\\
82	-90.378\\
83	-88.042\\
84	-85.29\\
85	-82.213\\
86	-78.884\\
87	-75.367\\
88	-71.715\\
89	-67.979\\
90	-64.195\\
91	-60.397\\
92	-56.615\\
93	-52.872\\
94	-49.188\\
95	-45.579\\
96	-42.056\\
97	-38.628\\
98	-35.3\\
99	-32.077\\
100	-29.214\\
101	-26.557\\
102	-24.097\\
103	-21.824\\
104	-19.728\\
105	-17.799\\
106	-16.031\\
107	-14.41\\
108	-12.929\\
109	-11.579\\
110	-10.35\\
111	-9.2346\\
112	-8.2242\\
113	-7.3101\\
114	-6.4858\\
115	-5.7439\\
116	-5.0772\\
117	-4.4793\\
118	-3.9446\\
119	-3.4672\\
120	-3.0419\\
121	-2.6638\\
122	-2.3286\\
123	-2.0312\\
124	-1.7687\\
125	-1.5373\\
126	-1.3336\\
127	-1.1547\\
128	-0.99798\\
129	-0.86088\\
130	-0.74122\\
};
\addlegendentry{$\pm \bar{\dot{m}}$};

\end{axis}

\begin{axis}[%
width=2.8in,
height=0.8in,
at={(0.8in,2.3in)},
scale only axis,
xmin=70,
xmax=130,
xticklabels={},
xmajorgrids,
ymin=0.15,
ymax=0.8,
ylabel={$\hat \theta$},
ymajorgrids,
legend style={at={(0.78,0.2)},anchor=south west,legend cell align=left,align=left,draw=white!15!black}
]
\addplot [color=mycolor1,densely dotted,very thick]
  table[row sep=crcr]{%
70	0.65931\\
71	0.65931\\
72	0.65931\\
73	0.65931\\
74	0.65931\\
75	0.65931\\
76	0.65934\\
77	0.65932\\
78	0.66248\\
79	0.66704\\
80	0.67132\\
81	0.67528\\
82	0.67892\\
83	0.68228\\
84	0.68535\\
85	0.68816\\
86	0.69072\\
87	0.69305\\
88	0.69514\\
89	0.69701\\
90	0.69865\\
91	0.70007\\
92	0.70125\\
93	0.70218\\
94	0.70286\\
95	0.70325\\
96	0.70333\\
97	0.70305\\
98	0.70236\\
99	0.7012\\
100	0.70119\\
101	0.70118\\
102	0.70118\\
103	0.70118\\
104	0.70118\\
105	0.70117\\
106	0.70118\\
107	0.70117\\
108	0.70117\\
109	0.70117\\
110	0.70118\\
111	0.70117\\
112	0.70117\\
113	0.70117\\
114	0.70117\\
115	0.70117\\
116	0.70117\\
117	0.70117\\
118	0.70117\\
119	0.70117\\
120	0.70117\\
121	0.70117\\
122	0.70118\\
123	0.70117\\
124	0.70117\\
125	0.70117\\
126	0.70117\\
127	0.70117\\
128	0.70117\\
129	0.70117\\
130	0.70117\\
};
\addlegendentry{$\hat \theta_1$};

\addplot [color=mycolor2,solid,very thick]
  table[row sep=crcr]{%
70	0.20347\\
71	0.20347\\
72	0.20347\\
73	0.20347\\
74	0.20347\\
75	0.20347\\
76	0.20348\\
77	0.20347\\
78	0.2048\\
79	0.20657\\
80	0.20814\\
81	0.20953\\
82	0.21078\\
83	0.2119\\
84	0.21292\\
85	0.21385\\
86	0.21469\\
87	0.21546\\
88	0.21616\\
89	0.21679\\
90	0.21737\\
91	0.21788\\
92	0.21834\\
93	0.21873\\
94	0.21907\\
95	0.21934\\
96	0.21954\\
97	0.21967\\
98	0.21972\\
99	0.21967\\
100	0.21967\\
101	0.21967\\
102	0.21967\\
103	0.21967\\
104	0.21967\\
105	0.21967\\
106	0.21967\\
107	0.21967\\
108	0.21967\\
109	0.21967\\
110	0.21967\\
111	0.21967\\
112	0.21967\\
113	0.21967\\
114	0.21967\\
115	0.21967\\
116	0.21967\\
117	0.21967\\
118	0.21967\\
119	0.21967\\
120	0.21967\\
121	0.21967\\
122	0.21967\\
123	0.21967\\
124	0.21967\\
125	0.21967\\
126	0.21967\\
127	0.21967\\
128	0.21967\\
129	0.21967\\
130	0.21967\\
};
\addlegendentry{$\hat \theta_2$};

\end{axis}

\begin{axis}[%
width=0.8in,
height=0.5in,
at={(2.7in,1.6in)},
scale only axis,
xmin=119,
xmax=130,
yticklabel style = {font=\small},
xticklabel style = {font=\small},
xmajorgrids,
ymin=0,
ymax=0.24378,
ymajorgrids,
legend style={legend cell align=left,align=left,draw=white!15!black},
axis background/.style={fill=white},
]
\addplot [color=mycolor1,solid,forget plot,very thick]
  table[row sep=crcr]{%
119	0.24378\\
120	0.21128\\
121	0.18279\\
122	0.15788\\
123	0.13609\\
124	0.11712\\
125	0.10062\\
126	0.086284\\
127	0.073861\\
128	0.063115\\
129	0.053837\\
130	0.045841\\
};

\addplot[area legend,solid,fill=lightgray,opacity=5.000000e-01,draw=none,forget plot]
table[row sep=crcr] {%
x	y\\
119	0\\
120	0\\
121	0\\
122	0\\
123	0\\
124	0\\
125	0\\
126	0\\
127	0\\
128	0\\
129	0\\
130	0\\
130	0.05\\
129	0.05\\
128	0.05\\
127	0.05\\
126	0.05\\
125	0.05\\
124	0.05\\
123	0.05\\
122	0.05\\
121	0.05\\
120	0.05\\
119	0.05\\
}--cycle;

\end{axis}

\end{tikzpicture}%
\caption{Optimal hyperparameters $\hat \theta$, the bound for the mean value $\bar m$, and the bound on its derivative $\bar{\dot{m}}$ compared to the inner approximation of the tube growth $\overline \tau, \underline \tau$ at each iteration of Algorithm~\ref{algo:hyp} for Example~\ref{ex:const}. The last 11 iterations are zoomed in the middle plot.}
\label{fig:conv_const}
\end{center}
\end{figure}
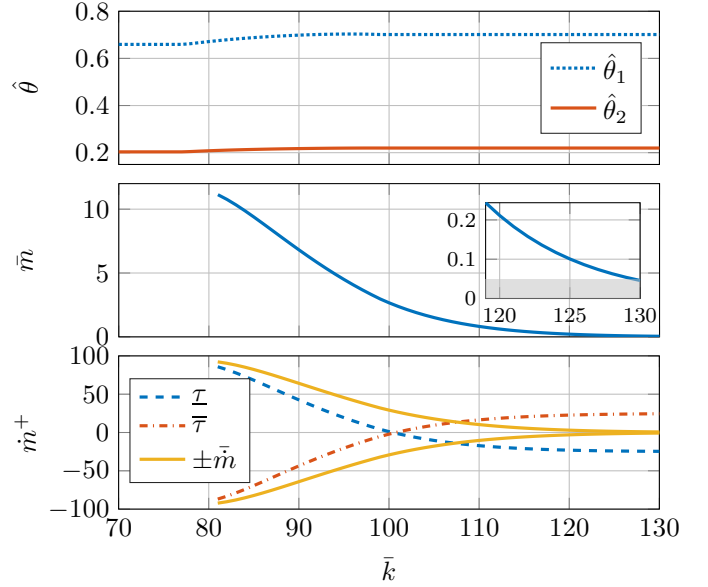

\begin{example}
Given is a dynamical system 
$x(k+1)=0.9x(k)+0.5u(k)$ 
with state constraints $\mathcal X=[-2,0.05]$ and input constraints $\mathcal U=[-0.5, 0.5]$. 
We want to model the reference depicted by the black solid line in Fig.~\ref{fig:pred_const}, which is however unknown to the GP. Only data points (depicted as crosses), which cover the transient phase  and the fact that the reference converges to a constant after the transient are known. 
We choose a constant zero prior mean function $m(t_*)=0$ and the squared exponential covariance function $\kappa(t_i,t_*)=\theta_1^2 \exp (-\frac{1}{2\theta_2^2}(t_*-t_i)^2)$. 
Algorithm~\ref{algo:hyp} is initialised  with $\bar k=70> k_\text{trans}$.  The convergence of the bounds $\bar m, \dot{\bar{m}}$ as well as the optimal hyperparameters $\hat \theta $ at each iteration are depicted in Figure~\ref{fig:conv_const}. Monotonicity of $|\dot \kappa|$ (line 5) is fulfilled after $k=81$ only.
 The convergence of $\bar m$ and $\bar{\dot{m}}$ is mainly influenced by the increase of the distance $|t_i-t_*|$. 
 After $\bar k=101$ the inner tube approximation is non empty (line 9 is feasible) and after $\bar k=108$ the bound on the derivative lies inside of it (line 9 is fulfilled). 
 The iterative algorithm is terminated at $\bar k=130$ as the bound for the predicted mean becomes small enough ($\bar m=0.0468$) to guarantee trackability for all times. The resulting GP prediction is depicted in Figure~\ref{fig:pred_const} in blue dashed line. In contrast to an GP whose hyperparameters were conventionally optimised without additional constraints (depicted in red dash-dotted line) it satisfies trackability for all times.
\label{ex:const}
\end{example}

\begin{figure}
\begin{center}
\input{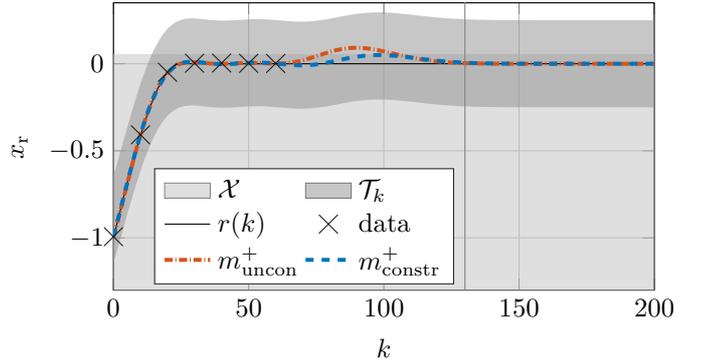}
\caption{Comparison between an unconstrained GP (dash-dotted) and the proposed learning algorithm (dashed). The constraints for the hyperparameter optimization, i.e. the intersection of $\mathcal X$ and $\mathcal T_k$ (shaded in dark grey), were used up to $\bar k=130$ (indicated with a vertical line).}
\label{fig:pred_const}
\end{center}
\end{figure}

\subsection{Periodical references}
Often periodic references are of interest. They occur e.g.\ in the periodic operation of chemical reactors or when a robot manufactures the same item iteratively.
To be able to extrapolate periodic signals with Gaussian processes we rely on Assumption~\ref{ass:mean_const} and the following structure of covariance functions:

\begin{ass}
The covariance function $ \kappa$ is stationary and periodic, such that $\kappa (t,t')= \kappa(t,t'+n T_\text{p})$ with $n\in \N_0 $ and period $T_\text{p}$.
\label{ass:cov_per}
\end{ass}

To guarantee trackability of the reference generated by a GP using Assumptions~\ref{ass:mean_const} and \ref{ass:cov_per} the constraints of the hyperparameter optimisation~\eqref{eq:hyp_opti} should cover at least one period  $\bar k \geq T_\text{p}/T_\text{s}$. 
If the period $T_\text{p}$ is an integer multiple of the sampling time $T_\text{s}$ then \eqref{eq:hyp_opti} with $\bar k \geq T_\text{p}/T_\text{s}$ guarantees trackability for all times $t \in \mathfrak T$. In the more general case, constraint satisfaction must be guaranteed not only point wise but ultimately  for all $t\in[0,T_\text{p}]$. Therefore, we will derive bounds on the mean and its derivative to guarantee constraint satisfaction not only at the sampling instances $t \in \mathfrak T$, but for all $t\in[0,T_\text{p}]$. 
To this end, we propose the following optimization problem:
\begin{subequations}
\begin{align}
\hat \theta & := \argmin_\theta l(\theta)& &\\
\text{s.\ t.}& & & \nonumber \\
m^+_\text{min}&=\min_t m^+(t|D_\theta,\theta), & \text{s.t.}\, 0 &\leq t\leq T_\text{s}\bar k, \label{eq:hyp_opti_per_minm}\\
m^+_\text{max}&=\max_t m^+(t|D_\theta,\theta), &\text{s.t.}\, 0&\leq t\leq T_\text{s}\bar k,\label{eq:hyp_opti_per_maxm}\\
\dot m^+_{\text{min},i}&=\min_t \dot m^+(t|D_\theta,\theta), &\text{s.t.}\,  \Delta t_i &\leq t\leq  \Delta t_{i+1}, \label{eq:hyp_opti_per_minmdot} \\
\dot m^+_{\text{max},i}&=\max_t \dot m^+(t|D_\theta,\theta),& \text{s.t.}\,  \Delta t_i &\leq t\leq  \Delta t_{i+1}, \label{eq:hyp_opti_per_maxmdot}\\
m^+_\text{min}&\in \mathcal X ,\hspace{0.43cm} m^+_\text{max}\in \mathcal X, & & \label{eq:hyp_opti_per_X}\\
\dot m^+_{\text{min},i} &\geq \underline \tau_i  , \hspace{0.2cm}  \dot m^+_{\text{max},i} \leq \overline \tau_i, & & \label{eq:hyp_opti_per_tube}\\
\forall i &\in [0,1,\ldots,\eta-1],& \Delta t_i&:= \frac{i}{\eta} T_\text{s}\bar k.  
\end{align}
\label{eq:hyp_opti_per}
\end{subequations}
Here, a constant upper and lower bound on the mean via \eqref{eq:hyp_opti_per_minm} and \eqref{eq:hyp_opti_per_maxm} is obtained which need to fulfil the state constraints \eqref{eq:hyp_opti_per_X}. As a constant bound for the derivative of the mean and the inner tube approximation could be quite conservative,  piecewise constant approximations are used. The whole time span $[0, T_\text{s}  \bar k ]$ is therefor divided into $\eta$ intervals. For each interval, a lower bound $\dot m^+_{\text{min},i}$ and upper bound $\dot m^+_{\text{min},i}$ on the derivative of the mean is calculated in \eqref{eq:hyp_opti_per_minmdot} and\eqref{eq:hyp_opti_per_maxmdot}.
Furthermore, inner approximations of the reachable tube growth $[\underline \tau_i, \overline \tau_i]$ for $\tilde{\mathcal{X}}_i:=\{m^+(t)\, \forall t\in [\Delta t_i, \Delta t_{i+1}]\}$ are determined according to Assumption~\ref{ass:tube_growth}. Followability is achieved via \eqref{eq:hyp_opti_per_tube} and via the following assumption:
\begin{ass}
The optimization problem \eqref{eq:hyp_opti_per} is feasible.
\label{ass:feasible_periodic}
\end{ass}

\begin{lemm}
Given Assumptions~\ref{ass:mean_const}, \ref{ass:tube_growth}, \ref{ass:cov_per}, \ref{ass:feasible_periodic}, and $\bar k \geq T_\text{p}/T_\text{s}$ the posterior mean \eqref{eq:mean} of a GP trained with \eqref{eq:hyp_opti_per} is trackable in the sense of Definition~\ref{def:reach} for system \eqref{eq:system}.
\end{lemm}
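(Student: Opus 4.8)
The plan is to exploit periodicity. Assumption~\ref{ass:feasible_periodic} guarantees that \eqref{eq:hyp_opti_per} admits an optimiser $\hat\theta$, and under Assumptions~\ref{ass:mean_const} and~\ref{ass:cov_per} the posterior mean \eqref{eq:mean} inherits the period $T_\text{p}$ of the kernel. First I would use the weighted-sum representation \eqref{eq:sum}, $m^+(t_*)=m+\sum_i c_i\kappa(t_i,t_*)$: since $m$ is constant and $\kappa(t_i,t_*+nT_\text{p})=\kappa(t_i,t_*)$ for all $n\in\N_0$ by Assumption~\ref{ass:cov_per}, we get $m^+(t_*+nT_\text{p})=m^+(t_*)$, and differentiating in $t_*$ shows $\dot m^+$ is $T_\text{p}$-periodic as well. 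Hence the entire trajectory $\{m^+(t):t\ge 0\}$ already occurs on one period $[0,T_\text{p}]$, which by $\bar k\ge T_\text{p}/T_\text{s}$ is contained in the optimisation window $[0,T_\text{s}\bar k]$.

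With this, the state-constraint half of Definition~\ref{def:reach} is immediate: because $[0,T_\text{p}]\subseteq[0,T_\text{s}\bar k]$, the quantities $m^+_\text{min}$ and $m^+_\text{max}$ of \eqref{eq:hyp_opti_per_minm}--\eqref{eq:hyp_opti_per_maxm} are the global minimum and maximum of $m^+$ over all $t\ge 0$. Constraint \eqref{eq:hyp_opti_per_X} puts both of them into $\mathcal X$, so by convexity of the (box) set $\mathcal X$ one has $m^+(t)\in[m^+_\text{min},m^+_\text{max}]\subseteq\mathcal X$ for every $t\in\R_0^+$, in particular $x_\text{r}(k)=m^+(T_\text{s}k)\in\mathcal X$ for all $k\in\N_0$.

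For the followability half, fix $k$ and set $\sigma:=T_\text{s}k\bmod T_\text{p}\in[0,T_\text{p}]$, so $x_\text{r}(k)=m^+(\sigma)$; let $I_i=[\Delta t_i,\Delta t_{i+1}]$ be the cell of the partition containing $\sigma$, whence $x_\text{r}(k)\in\tilde{\mathcal X}_i$. The increment $x_\text{r}(k+1)-x_\text{r}(k)=\int_{T_\text{s}k}^{T_\text{s}(k+1)}\dot m^+(s)\,ds$ equals $T_\text{s}$ times a mean value of $\dot m^+$ on a length-$T_\text{s}$ window and therefore, using periodicity together with \eqref{eq:hyp_opti_per_minmdot}--\eqref{eq:hyp_opti_per_maxmdot}, lies in $[\dot m^+_{\text{min},i}T_\text{s},\dot m^+_{\text{max},i}T_\text{s}]$. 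Then \eqref{eq:hyp_opti_per_tube} and Assumption~\ref{ass:tube_growth} give $x_\text{r}(k+1)\in[x_\text{r}(k)-\underline\tau_iT_\text{s},x_\text{r}(k)+\overline\tau_iT_\text{s}]\subseteq\mathcal T_{k+1}=\mathcal R_1(x_\text{r}(k))$, and by the definition of the reachability set there is $u_\text{r}(k)\in\mathcal U$ with $x_\text{r}(k+1)=f(x_\text{r}(k),u_\text{r}(k))$. Combining the two halves shows that the reference $x_\text{r}$, $x_\text{r}(k)=m^+(T_\text{s}k)$, is trackable in the sense of Definition~\ref{def:reach}.

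The step I expect to be the main obstacle is the bookkeeping in the last paragraph: the one-step window $[T_\text{s}k,T_\text{s}(k+1)]$ reduced modulo $T_\text{p}$ need not lie inside a single cell $I_i$, it may straddle an interior partition point or wrap past $T_\text{p}$, so one must argue that the piecewise bounds $[\dot m^+_{\text{min},i},\dot m^+_{\text{max},i}]$ and $[\underline\tau_i,\overline\tau_i]$ still dominate $\dot m^+$ on the whole window. I would handle this either by taking the partition coarse enough that each cell is at least $T_\text{s}$ wide and imposing compatibility of the tube bounds on adjacent cells, or --- most cleanly --- by observing that the $\eta=1$ instance of \eqref{eq:hyp_opti_per} collapses to a single global bound and then mimics, over one period, the argument of Lemma~\ref{lemma:constant}, with periodicity playing the role of the monotonicity hypotheses used there; the piecewise version only tightens the bounds and follows mutatis mutandis. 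A smaller point worth spelling out is the convexity of $\mathcal X$ (and of the tube cross-sections) used in the ``endpoints in the set implies segment in the set'' steps.
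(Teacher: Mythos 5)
Your proof follows essentially the same route as the paper's: periodicity of $m^+$ (constant prior mean, Assumption~\ref{ass:mean_const}, plus periodic kernel, Assumption~\ref{ass:cov_per}) reduces everything to one period contained in the constraint window $[0,T_\text{s}\bar k]$, constraints \eqref{eq:hyp_opti_per_minm}--\eqref{eq:hyp_opti_per_X} give state-constraint satisfaction for all times, and the piecewise derivative bounds \eqref{eq:hyp_opti_per_minmdot}--\eqref{eq:hyp_opti_per_maxmdot} together with \eqref{eq:hyp_opti_per_tube} and Assumption~\ref{ass:tube_growth} give followability, exactly as in the paper. The cell-straddling issue you flag (a sampling window $[T_\text{s}k,T_\text{s}(k+1)]$ crossing a partition point or wrapping past $T_\text{p}$) is a genuine subtlety that the paper's own proof does not address either; your proposed remedies (cells of width at least $T_\text{s}$ with compatible bounds, or collapsing to $\eta=1$) would close it.
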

\begin{proof}
Under Assumption~\ref{ass:feasible_periodic} optimization problem \eqref{eq:hyp_opti_per} guarantees $m^+(t)\in [m_\text{min}^+, m^+_\text{max}] \subseteq \mathcal X$ for all $t\in[0,T_\text{s} \bar k]$. With $\bar k \geq T_\text{p}/ T_\text{s}$ and Assumptions~\ref{ass:mean_const} and \ref{ass:cov_per} $m^+(t+nT_\text{p}) = m^+(t)$, where $n\in \N$ and consequently $m^+(t) \in \mathcal X$ for all $t\in \R$. 
For each time interval $\Delta t_i \geq t\geq \Delta t_{i+1}$, $\dot m^+(t) \in [m^+_{\text{min},i},m^+_{\text{max},i}] \subseteq [\underline \tau_i, \overline \tau_i]$, such that with Assumption~\ref{ass:tube_growth} $m^+(t)\in \mathcal T$ for each $t \in [0,T_\text{p}/T_\text{s}]$. With $\bar k \geq T_\text{p}/ T_\text{s}$ and Assumptions~\ref{ass:mean_const} and \ref{ass:cov_per} $m^+(t) \in \mathcal T$ for all $t\in \R$, and consequently $m^+(t) \in (\mathcal X \cap \mathcal T)$ for all $t\in \R$. \qed
\end{proof}
  
Please note that from a practical side, optimization problem \eqref{eq:hyp_opti} with very small sampling time $T_\text{s}$ might be preferred over \eqref{eq:hyp_opti_per} as it is computationally cheaper and tends to the same result for $T_\text{s} \rightarrow 0$.

\begin{figure}
\begin{center}
\input{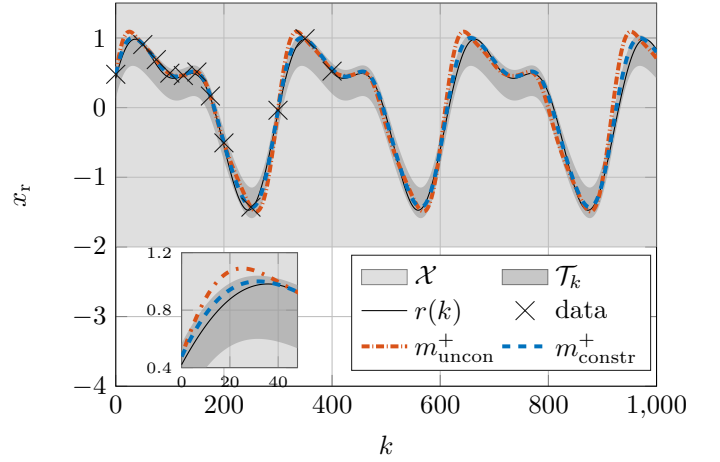}
\caption{GP prediction of a periodic reference. The underlying unknown reference $r(k)=\sin(2k)+0.5\sin(4k+1)$ is modelled based on the data points ($\times$) with a periodic covariance function.  }
\label{fig:pred_sin}
\end{center}
\end{figure}

\begin{figure}
\begin{center}
\input{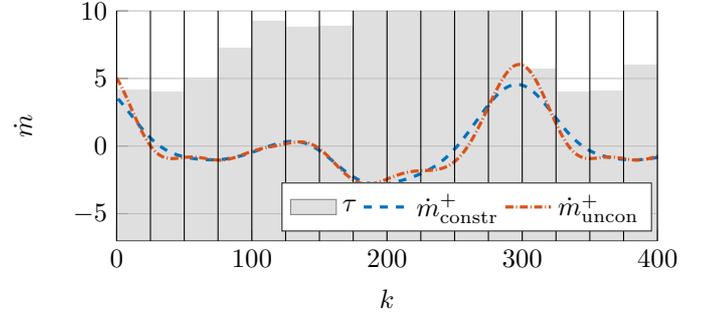}
\caption{Inner tube growth rate approximation (shaded area) which should not be left by the derivative of the predicted mean $\dot m$. A number of $\eta=16$ intervals are used to lower the conservatism of the bounds.}
\label{fig:conv_sin}
\end{center}
\end{figure}

\begin{example}
Given is the dynamical system $x(k+1)=0.9x(k)+0.1u(k)$ with state constraints $\mathcal X=[-2,2]$ and input constraints $\mathcal U=[-3, 1.4]$. 
In accordance to Assumptions~\ref{ass:mean_const} and \ref{ass:cov_per} a zero prior mean function $m(t_*)=0$ and a periodical covariance function $\kappa=\theta_1^2 \exp (-\frac{2}{\theta_2^2}\sin(\pi\theta_3^{-1}(t_*-t_i)^2))$ are chosen. 
Figure~\ref{fig:pred_sin} shows the (unknown) reference $r(k)$ in solid black which should be modelled based on the observations depicted as black crosses. The predicted mean should be consistent with the intersection of the state constraints $\mathcal X$ and the reachability tube $\mathcal T_k$, depicted as dark grey area. 
 The Gaussian process parametrised via the constrained hyperparameter optimization provides a  reference prediction which is depicted as  dashed line. It satisfies the trackability conditions for all times. In contrast to this, an unconstrained hyperparameter optimization using the same training data leads to the red dash-dotted prediction which violates the reachable tube. This can be seen e.g. in the inlay plot in Figure~\ref{fig:pred_sin} showing a zoomed view on the first 50 steps as well as in Figure~\ref{fig:conv_sin}. 
The derivative of the unconstrained mean $\dot m_\text{uncon}^+$ violates the inner approximation of the tube growth rate (the trajectory leaves the grey area) depicted in Figure~\ref{fig:conv_sin}. In contrast, the derivative of the constrained mean $\dot m_\text{constr}^+$ is fulfilling those constraints. Even though the unconstrained GP satisfies the state constraints $\mathcal X$, it is not followable by the system dynamics due to the input constraints.

\end{example}

\section{Conclusion}
We outlined how Gaussian processes can model external reference signals of different structure (asymptotically constant or periodic). These GP predictions can be used e.g. in model predictive control to achieve an improved tracking performance as well as providing stability guarantees. To do so, we have proposed different algorithms to train GPs. 
These concepts are based on constrained hyperparameter optimisation to guarantee trackability and constrained satisfaction of the  predicted GP mean. Investigations for arbitrary references, online learning, as well as the use of truncated multinormal distributions for reference predictors are interesting future research directions.

\bibliography{ifacconf}             

\begin{thebibliography}{18}
\providecommand{\natexlab}[1]{#1}
\providecommand{\url}[1]{\texttt{#1}}
\providecommand{\urlprefix}{URL }
\expandafter\ifx\csname urlstyle\endcsname\relax
  \providecommand{\doi}[1]{doi:\discretionary{}{}{}#1}\else
  \providecommand{\doi}{doi:\discretionary{}{}{}\begingroup
  \urlstyle{rm}\Url}\fi

\bibitem[{Berkenkamp and Schoellig(2015)}]{berkenkamp2015safe}
Berkenkamp, F. and Schoellig, A.P. (2015).
\newblock {Safe and robust learning control with Gaussian processes}.
\newblock In \emph{European Control Conference (ECC)}, 2496--2501. IEEE.

\bibitem[{Blanchini and Miani(2008)}]{blanchini2008set}
Blanchini, F. and Miani, S. (2008).
\newblock \emph{Set-theoretic methods in control}.
\newblock Springer.

\bibitem[{Da~Veiga and Marrel(2012)}]{daVeiga2012gaussian}
Da~Veiga, S. and Marrel, A. (2012).
\newblock Gaussian process modeling with inequality constraints.
\newblock In \emph{Annales de la Facult{\'e} des sciences de Toulouse:
  Math{\'e}matiques}, volume~21, 529--555.

\bibitem[{Faulwasser and Findeisen(2011)}]{faulwasser2011model}
Faulwasser, T. and Findeisen, R. (2011).
\newblock A model predictive control approach to trajectory tracking problems
  via time-varying level sets of lyapunov functions.
\newblock In \emph{2011 50th IEEE Conference on Decision and Control and
  European Control Conference}, 3381--3386. IEEE.

\bibitem[{Ferramosca et~al.(2009)Ferramosca, Lim{\'o}n, Alvarado, Alamo, and
  Camacho}]{ferramosca2009mpc}
Ferramosca, A., Lim{\'o}n, D., Alvarado, I., Alamo, T., and Camacho, E.F.
  (2009).
\newblock {MPC for tracking of constrained nonlinear systems}.
\newblock In \emph{Proceedings of the 48h IEEE Conference on Decision and
  Control (CDC) held jointly with 2009 28th Chinese Control Conference},
  7978--7983. IEEE.

\bibitem[{Garone et~al.(2017)Garone, Di~Cairano, and
  Kolmanovsky}]{garone2017reference}
Garone, E., Di~Cairano, S., and Kolmanovsky, I. (2017).
\newblock Reference and command governors for systems with constraints: A
  survey on theory and applications.
\newblock \emph{Automatica}, 75, 306--328.

\bibitem[{Klenske et~al.(2016)Klenske, Zeilinger, Sch{\"o}lkopf, and
  Hennig}]{Klenske2016}
Klenske, E.D., Zeilinger, M.N., Sch{\"o}lkopf, B., and Hennig, P. (2016).
\newblock Gaussian process-based predictive control for periodic error
  correction.
\newblock \emph{IEEE Transactions on Control Systems Technology}, 24(1),
  110--121.

\bibitem[{Kocijan(2016)}]{kocijan2016modelling}
Kocijan, J. (2016).
\newblock \emph{{Modelling and control of dynamic systems using Gaussian
  process models}}.
\newblock Springer.

\bibitem[{Kocijan et~al.(2004)Kocijan, Murray-Smith, Rasmussen, and
  Girard}]{kocijan2004gaussian}
Kocijan, J., Murray-Smith, R., Rasmussen, C.E., and Girard, A. (2004).
\newblock Gaussian process model based predictive control.
\newblock In \emph{Proceedings of the 2004 American control conference},
  volume~3, 2214--2219. IEEE.

\bibitem[{Limon et~al.(2012)Limon, Alamo, de~la Pe{\~n}a, Zeilinger, Jones, and
  Pereira}]{limon2012mpc}
Limon, D., Alamo, T., de~la Pe{\~n}a, D.M., Zeilinger, M.N., Jones, C., and
  Pereira, M. (2012).
\newblock Mpc for tracking periodic reference signals.
\newblock \emph{IFAC Proceedings Volumes}, 45(17), 490--495.

\bibitem[{Lim{\'o}n et~al.(2008)Lim{\'o}n, Alvarado, Alamo, and
  Camacho}]{limon2008mpc}
Lim{\'o}n, D., Alvarado, I., Alamo, T., and Camacho, E.F. (2008).
\newblock Mpc for tracking piecewise constant references for constrained linear
  systems.
\newblock \emph{Automatica}, 44(9), 2382--2387.

\bibitem[{Maiworm et~al.(2018)Maiworm, Wagner, Temirov, Tautz, and
  Findeisen}]{maiworm2018two}
Maiworm, M., Wagner, C., Temirov, R., Tautz, F.S., and Findeisen, R. (2018).
\newblock Two-degree-of-freedom control combining machine learning and extremum
  seeking for fast scanning quantum dot microscopy.
\newblock In \emph{2018 Annual American Control Conference (ACC)}, 4360--4366.
  IEEE.

\bibitem[{Matschek et~al.(2019)Matschek, B{\"a}thge, Faulwasser, and
  Findeisen}]{matschek2019nonlinear}
Matschek, J., B{\"a}thge, T., Faulwasser, T., and Findeisen, R. (2019).
\newblock Nonlinear predictive control for trajectory tracking and path
  following: An introduction and perspective.
\newblock In \emph{Handbook of Model Predictive Control}, 169--198. Springer.

\bibitem[{Matschek et~al.(2020)Matschek, Gonschorek, Hanses, Elkmann, Ortmeier,
  and Findeisen}]{matschek2020}
Matschek, J., Gonschorek, T., Hanses, M., Elkmann, N., Ortmeier, F., and
  Findeisen, R. (2020).
\newblock {Learning references with Gaussian processes in model predictive
  control applied to robot assisted surgery}.
\newblock In \emph{European Control Conference}.
\newblock Submitted.

\bibitem[{Ostafew et~al.(2016)Ostafew, Schoellig, Barfoot, and
  Collier}]{Ostafew2016}
Ostafew, C.J., Schoellig, A.P., Barfoot, T.D., and Collier, J. (2016).
\newblock Learning-based nonlinear model predictive control to improve
  vision-based mobile robot path tracking.
\newblock \emph{Journal of Field Robotics}, 33(1), 133--152.

\bibitem[{Rasmussen and Williams(2006)}]{Rasmussen2006}
Rasmussen, C.E. and Williams, C.K.I. (2006).
\newblock \emph{{G}aussian {P}rocesses for machine learning}.
\newblock MIT Press.

\bibitem[{Rawlings et~al.(2017)Rawlings, Mayne, and Diehl}]{rawlings2017model}
Rawlings, J.B., Mayne, D.Q., and Diehl, M. (2017).
\newblock \emph{Model predictive control: theory, computation, and design},
  volume~2.
\newblock Nob Hill Publishing Madison, WI.

\bibitem[{Salzmann and Urtasun(2010)}]{salzmann2010implicitly}
Salzmann, M. and Urtasun, R. (2010).
\newblock {Implicitly constrained Gaussian process regression for monocular
  non-rigid pose estimation}.
\newblock In \emph{Advances in Neural Information Processing Systems},
  2065--2073.

\end{thebibliography}

\end{document}